\newtheorem{conj}{ Conjecture}[section]
\newtheorem{defn}[conj]{ Definition}
\newtheorem{example}[conj]{ Example}
\newtheorem{theorem}[conj]{ Theorem}
\newtheorem{remark}[conj]{ Remark}
\newtheorem{corollary}[conj]{ Corollary}
\newtheorem{proposition}[conj]{ Proposition}
\newtheorem{question}[conj]{ Question}
\providecommand\codim{\text{\rm codim}}
\providecommand\cmd{\text{\rm cmd}}
\providecommand\depth{\text{\rm depth}}
\providecommand\pdim{\text{\rm pdim}}
\providecommand\Tor{\text{\rm Tor}}
\providecommand{\BN}{{\mathbb N}}
\providecommand{\BZ}{{\mathbb Z}}
\providecommand{\BQ}{{\mathbb Q}}
\providecommand{\sk}{{\ensuremath{\sf k }}}
\begin{document}
\title{Modules With Pure Resolutions}
	
\author[H. Ananthnarayan]{H. Ananthnarayan}
\address{Department of Mathematics, I.I.T. Bombay, Powai, Mumbai 400076.}
\email{ananth@math.iitb.ac.in}
	
\author[Rajiv Kumar]{Rajiv Kumar}
\address{Department of Mathematics, I.I.T. Bombay, Powai, Mumbai 400076.}
\email{rajiv@math.iitb.ac.in}
	
\subjclass[2010]{Primary 13A02, 13C14, 13D02}
	
\keywords{Herzog-\ensuremath{\text{K}\ddot{\text{u}}\text{hl}} equations, Cohen-Macaulay defect, graded Betti numbers, pure modules}
	
\begin{abstract}
Let $R$ a standard graded algebra over a field $\sk$. In this paper, we show that the property of $R$ being Cohen-Macaulay is characterized by the existence of a pure Cohen-Macaulay $R$-module corresponding to any degree sequence of length at most $\depth(R)$. We also give a relation in terms of graded Betti numbers, called the Herzog-\ensuremath{\text{K}\ddot{\text{u}}\text{hl}} equations, for a pure $R$-module $M$ to satisfy the condition $\dim(R) - \depth(R) = \dim(M) - \depth(M)$. When $R$ is Cohen-Macaulay, we prove an analogous result characterizing all graded Cohen-Macaulay $R$-modules.
\end{abstract}

\maketitle

\section*{Introduction}
Let $S=\sk[X_1,\ldots,X_n]$ be a polynomial ring over a field $\sk$. M. Boij and J. \ensuremath{\text{S}\ddot{\text{o}}\text{derberg}} conjectured(\cite[Conjecture 2.4]{BS}) that given any degree sequence {\bf d} of length at most $n$, there is a pure Cohen-Macaulay $S$-module of type {\bf d}. This article is motivated by this conjecture, and the fact that it is true(see Remark \ref{purermk}(a)), as shown by D. Eisenbud and F. Schreyer. 

In this paper, we study modules of finite projective dimension, with pure resolutions, over an arbitrary standard graded ring $R$. The main tool used is the Cohen-Macaulay defect of an $R$-module $M$, denoted $\cmd(M)$. We show that the existence of a pure $R$-module $M$ of finite projective dimension forces $\cmd(R) \leq \cmd(M)$, a crucial observation that is used to prove the main results in this article. In particular, this shows that if such a module is Cohen-Macaulay, then the ring is also forced to be so. 

In \cite{BS}, Boij and \ensuremath{\text{S}\ddot{\text{o}}\text{derberg}} characterize graded Cohen-Macaulay modules over $S$, by relations among the graded Betti numbers of the module called the Herzog-\ensuremath{\text{K}\ddot{\text{u}}\text{hl}} equations. In Section \ref{HK}, we generalize this result to arbitrary Cohen-Macaulay standard graded algebras in Theorem \ref{thm3}. The Boij-Soderberg result extends a theorem of J. Herzog and M. \ensuremath{\text{K}\ddot{\text{u}}\text{hl}} from \cite{HK}, in which they prove it for pure Cohen-Macaulay modules over $S$. In Theorem \ref{thm2}, we generalize the result of Herzog and \ensuremath{\text{K}\ddot{\text{u}}\text{hl}} to a standard graded ring $R$, and characterize pure $R$-modules $M$ satisfying $\cmd(M) = \cmd(R)$.

The above results are used in Section \ref{App}, first to compute the multiplicity of such modules in Corollary \ref{cor1}, and then to get some interesting observations about cyclic modules with pure resolutions in Subsection \ref{Cyclic}. In Subsection \ref{poly}, we give a sufficient condition for a pure module over the polynomial ring to be Cohen-Macaulay. In particular, in Corollary \ref{cyclicapp}, we see conditions on the degree sequences that force a pure quotient of a polynomial ring to be Gorenstein, or a complete intersection. 

Finally, we see a characterization for $R$ to be Cohen-Macaulay in terms of pure modules. Using Proposition \ref{hklem2}, we prove (in Theorem \ref{thm1}) that $R$ is Cohen-Macaulay if and only if there is a pure Cohen-Macaulay $R$-module corresponding to any degree sequence of length at most $\depth(R)$.

Section \ref{Prelim} contains the notation, definitions, basic observations, and previous results that are needed in the rest of the article. 
	    
\section{Preliminaries}\label{Prelim}

\subsection{Notation}

\begin{enumerate}[{\rm a)}]
\item By $\sk$, we mean an infinite field, and $R$ denotes a standard graded $\sk$-algebra, i.e., $R_0 = \sk$, and $R$ is generated as an algebra over $\sk$ by finitely many elements of degree $1$. 

\item $M$ denotes a finitely generated graded $R$-module, $\mu(M)$, the {\it minimal number of generators} of $M$, $\cmd(M)=\dim(M)-\depth(M)$ is the {\it Cohen-Macaulay defect of $M$}, and the $i$th {\it total Betti number} of $M$ is $\beta_i^R(M) = \dim_{\sk}(\Tor_i^R(\sk,M))$. 
\item The $(i,j)$th-graded Betti number of $M$ is $\beta^R_{i,j}(M) = \dim_\sk(\Tor_i^R(M,\sk)_j)$. The {\it Betti diagram (or Betti table)} of $M$ is written as
\begin{center}
$\beta^R(M)$=
\begin{tabular}{|c |ccccc| }
\hline
\backslashbox{$j$}{$i$}& 0 & 1 & $\cdots$ & p&$\cdots$\\
\hline

$\vdots$& $\vdots$& $\vdots$& $\ddots$ & $\vdots$&$\ddots$\\
0 & $\beta_{0,0}$ & $\beta_{1,1}$ & $\cdots$ &$\beta_{p,p}$&$\vdots$\\
1 & $\beta_{0,1}$ & $\beta_{1,2}$ & $\cdots$ & $\beta_{p,p+1}$&$\vdots$\\
$\vdots$ & $\vdots$ &$\vdots$ & $\ddots$ &$\vdots$&$\ddots$\\
\hline

\end{tabular}
, i.e., the $(i,j)$th entry is $\beta_{i,i+j}^R(M).$
\end{center}

\item If $M_j$ is the $j$th graded component of $M$, the {\it shifted module} $M(d)$ is the $R$-module $M$ with graded components $M(d)_{j}=M_{j+d}$. The \emph{Hilbert series} of $M$ is $H_M(z)=\sum\limits_{j\in\BZ}\lambda(M_j)z^j$. 

\item By the Hilbert-Serre Theorem (see \cite[Section 4.1]{BH}), if $\dim(M) = r$, then there exists a unique Laurent polynomial $f(z)$ over $\BZ$ with $f(1) > 0$ such that $H_M(z)=\frac{f(z)}{(1-z)^r}$. The \emph{multiplicity} of $M$ is $e(M) = f(1)$.
\end{enumerate}
	    
\subsection{Pure Modules and Degree Sequences}    

\begin{defn}\label{puredef}
{\rm Let $R$ be standard graded, and $M$ a finitely generated graded $R$-module.
\begin{enumerate}[a)] 
\item An $R$-module $M$ has a {\it pure resolution} if each column of the Betti diagram of $M$ has at most one non-zero entry. If this happens, we say that $M$ is a {\it pure $R$-module}.
	    			
If $M$ has a pure resolution, and $\beta_{i,d_i} \neq 0$ for each $i < \pdim_R(M) + 1$, then $M$ is said to have a {\it pure resolution of type} ${\bf d} = (d_0,d_1,\ldots, d_p)$ if $M$ is a module of finite projective dimension $p$ over $R$, and of type $(d_0,d_1,\ldots)$ otherwise. 

\item For $p\leq \depth(R)$, a sequence ${\bf d} = (d_0,d_1,\ldots,d_p)$ is said to be a {\it degree sequence} of length $p$ if $d_i \in \BZ$, and $d_i < d_{i+1}$ for all $i$. 
\end{enumerate}
}\end{defn}

In the following remark, we record the relation between the Hilbert series, Betti numbers and shifts in the minimal free resolution of a pure module.

\begin{remark}[Hilbert series and Betti tables of pure modules]\hfill{}\\
{\rm Let $R$ be standard graded with Hilbert series $H_R(z) = \sum\limits _{i \geq 0}h_i^Rz^i$, $M$ a pure $R$-module of type ${\bf d} = (d_0,d_1,\ldots)$, with Hilbert series $H_M(z)  = \sum\limits_{i \in \BZ}h_i^Mz^i$, and $\beta_i = \beta_{i,d_i}^R(M)$. Then the Hilbert series and Betti table of $M$ can be obtained from one another as follows:
\begin{enumerate}[a)]
\item Fix $0 \leq i < \pdim_R(M) + 1$. If $K$ is the $i$th syzygy of $M$ in a minimal $R$-free resolution, then the exact sequence $$ 0 \rightarrow K \rightarrow R(-d_{i-1})^{\oplus \beta_{i-1}} \rightarrow \cdots \rightarrow R(-d_0)^{\oplus\beta_0}\rightarrow M\rightarrow 0$$ yields $H_K(z) = \left(\sum\limits_{k = 0}^{i-1} (-1)^{i-1-k} \beta_{k}z^{d_{k}}\right)H_R(z) + (-1)^i H_M(z)$.

Now, $d_i$ is the order of $H_K(z)$, and $\beta_i$ is the coefficient of $z^{d_i}$ in $H_K(z)$. Hence, we see that $d_i$, and $\beta_i$ can be inductively computed from $H_M(z)$ as 
$$d_{i}=\min\left\{j:\sum\limits_{k=0}^{i-1}(-1)^k\beta_kh^R_{j-d_k}-h^M_j\neq0\right\}, \quad\quad \beta_{i}= \left\vert\sum_{k=0}^{i-1}(-1)^k\beta_kh^R_{d_{i}-d_k}-h^M_{d_{i}}\right\vert.$$

\item The additivity of Hilbert series on exact sequences applied to a minimal $R$-resolution of $M$ shows that $h_i^M$ can be obtained from $d_i$, and $\beta_i$ by using $H_M(z) = H_R(z)\left(\sum\limits_{i\geq 0} (-1)^i \beta_i z^{d_i}\right)$.
\end{enumerate}
}\end{remark}

\begin{remark}[Pure modules over polynomial rings]\label{purermk}\hfill{}\\
{\rm Let $S = \sk[X_1,\ldots,X_n]$, and ${\bf d} = (d_0,d_1,\ldots,d_p)$, $p \leq n$, be a degree sequence.
\begin{enumerate}[a)] 
\item\label{ES1} (Eisenbud-Schreyer, \cite[Theorem 5.1]{ES})\hfill{}\\ There exists a pure Cohen-Macaulay $S$-module of type ${\bf d}$.

\item\label{HK1} (Herzog-K$\ddot{\text{u}}$hl, \cite[Theorem 1]{HK})\hfill{}\\ If $M$ is a pure $S$-module of type ${\bf d}$, then $M$ is Cohen-Macaulay if and only if $$\beta_i^S(M)=\beta_0^S(M)\prod\limits_{j\neq 0,i}\left|\frac{d_j-d_0}{d_j-d_i}\right|\quad \quad{\rm for}\ 1\leq i\leq p.$$ 

\item\label{purediag} We define two \emph{pure diagrams} corresponding to {\bf d}: $$\pi({\bf d}) = \frac{1}{\beta_0^S(M)}\beta^S(M), \text{ and }\pi'({\bf d}) = \frac{1}{\beta_p^S(M)}\beta^S(M),$$
where $M$ is a pure Cohen-Macaulay $S$-module of type {\bf d}, which exists by (\ref{ES1}). 

By (\ref{HK1}), these are independent of $S$ and $M$, and for $0 \leq i \leq p$, and $j \in \BZ$, we have 
$$\pi({\bf d})_{i,j} =\begin{cases}
\prod\limits_{k\neq i,0}\left|\frac{d_k-d_0}{d_k-d_i}\right| & \text{for }j=d_i\\
0 & \text{for } j\neq d_i
\end{cases}\quad \text{and}\quad
\pi'({\bf d})_{i,j} =\begin{cases}
\prod\limits_{k\neq i,p}\left|\frac{d_k-d_p}{d_k-d_i}\right| & \text{for }j=d_i\\
0 & \text{for } j\neq d_i
\end{cases}.$$

\item\label{ES2} Boij-S$\ddot{\text{o}}$derberg(\cite[Theorem 2]{BS2012}) and Eisenbud-Schreyer(\cite[Theorem 7.1]{ES}) prove:

Let $M$ be an $S$-module with $\codim(M) = c$ and $\pdim_S(M) = p$. There exist finitely many degree sequences ${\bf d}$ of length $s$, with $c\leq s\leq p$, such that $\beta^S(M)=\sum c_{\bf d}\pi({\bf d}),$ where $c_{\bf d} \in \BQ_+$.

Since $\pi({\bf d})$, and $\pi'({\bf d})$ are positive rational multiples of each other for each ${\bf d}$, we see that there exist $c'_{\bf d} \in \BQ_+$, such that $\beta^S(M)=\sum c'_{\bf d}\pi'({\bf d})$.
\end{enumerate}
}\end{remark}

\section{Herzog-\ensuremath{\text{K}\ddot{\text{u}}\text{hl}} Equations}\label{HK}

\subsection{Herzog-\ensuremath{\text{K}\ddot{\text{u}}\text{hl}} Equations over Arbitrary Rings} The Herzog-\ensuremath{\text{K}\ddot{\text{u}}\text{hl}} equations are relations among the graded Betti numbers of a module over a polynomial ring. These were initially studied in \cite{HK}, and then in \cite{BS}, to characterize Cohen-Macaulay modules. We look at similar relations over all standard graded $\sk$-algebras in this section.  

\begin{defn}\label{HKdef}
Let $R$ be a standard graded $\sk$-algebra, and $M$ a graded $R$-module of finite projective dimension $p$. Let $\beta_{i,j} = \beta_{i,j}^R(M)$. We say that $M$ satisfies the Herzog-\ensuremath{\text{K}\ddot{\text{u}}\text{hl}} equations if $$\sum\limits_{i,j}(-1)^ij^l\beta_{i,j}=0\quad {\rm for } \,\,l=0,\ldots,p-1.$$
\end{defn}

\begin{remark}\label{HKrmk}{\rm Let $S = \sk[X_1,\ldots,X_n]$, and $M$ be a graded $S$-module with $\pdim_S(M) = p$. \\
a) If $M$ is pure of type $(d_0,\ldots, d_p)$, then $M$ satisfies the Herzog-\ensuremath{\text{K}\ddot{\text{u}}\text{hl}} equations if and only if $\beta_i^S(M)=\beta_0^S(M)\prod\limits_{j\neq 0,i}\left|\frac{d_j-d_0}{d_j-d_i}\right|$ for $ 1\leq i\leq p.$ (For an explanation, see \cite{BS}, or the proof of the equivalence of (iii) and (iv) in Theorem \ref{thm2}).
In light of Remark \ref{purermk}(\ref{HK1}), the relations in Definition \ref{HKdef} are called the Herzog-\ensuremath{\text{K}\ddot{\text{u}}\text{hl}} equations. \\
b) In \cite[Section 2.1]{BS}, Boij and \ensuremath{\text{S}\ddot{\text{o}}\text{derberg}} extend the Herzog-\ensuremath{\text{K}\ddot{\text{u}}\text{hl}} result to $S$-modules that are not necessarily pure. They prove that an $S$-module $M$ satisfies the Herzog-\ensuremath{\text{K}\ddot{\text{u}}\text{hl}} equations in Definition \ref{HKdef} if and only if it is Cohen-Macaulay.
}\end{remark}

We begin with the following observation:

\begin{remark}\label{hklem}{\rm Let $h(z)=\sum\limits_{i,j}(-1)^i{b_{i,j}z^j}$ be a Laurent polynomial over $\BZ$. Then $(1-z)^n$ divides $h(z)$ if and only if the $b_{i,j}$'s satisfy the following equations:
$$\sum\limits_{i,j}(-1)^ij^lb_{i,j}=0\quad {\rm for } \,\,l=0,\ldots,n-1.$$
In order to see this,  let $a_l=\sum\limits_{i,j}(-1)^ij^lb_{i,j}=0$ for $l=0,\ldots,n-1.$ Then $h(1) = a_0$, and 
$$h^{(l)}(1)=a_l-\left(\sum\limits_{1 \leq k \leq l} k\right)a_{l-1}+\left(\sum\limits_{1\leq k_1 < k_2 \leq l} k_1k_2\right)a_{l-2} - \cdots+(-1)^{l-1}\left(\prod\limits_{k=1}^{l}k\right)a_1\text{ for }1 \leq l < n.$$ 
Thus $\begin{pmatrix} h(1)\\h^{(1)}(1) \\ \vdots \\ h^{(n-1)}(1) \end{pmatrix} = A \begin{pmatrix} a_0\\a_1 \\ \vdots \\ a_{n-1}\end{pmatrix}$, where $A$ is an $n \times n$ lower triangular matrix with diagonal entries $1$.

Thus we are done, since $(1-z)^n$ divides $h(z)$ if and only if $h^{(l)}(1)=0$ for $0 \leq l < n$, which, by the invertibility of $A$, happens if and only if $a_0, \ldots, a_{n-1}$ are all $0$.
}\end{remark}
	    
\begin{proposition}\label{hklem1} Let $R$ be standard graded, and $M$ a graded $R$-module of codimension $c$ and finite projective dimension $p$. If $\beta_{i,j} = \beta_{i,j}^R(M)$, and $h(z)=\sum\limits_{i = 0}^{p}\sum\limits_{j \in \BZ}(-1)^i{\beta_{i,j}z^j}$, then:
\begin{enumerate}[\rm a)]
\item $(1-z)^c$ divides $h(z)$, and $(1-z)^{c+1}$ does not divide $h(z)$,
\item the graded Betti numbers of $M$ satisfy:
$$\sum\limits_{i = 0}^p\sum\limits_{j \in \BZ}(-1)^ij^l\beta_{i,j}=0\quad {\rm for } \,\,l=0,\ldots,c-1, \text{ and }$$
\item the multiplicity of $M$ is $e(M)=\dfrac{e(R)}{c!}\left[\sum\limits_{i = 0}^{p}\sum\limits_{j \in \BZ}(-1)^{i+c}j^c\beta_{i,j}\right]$.
\end{enumerate}
\end{proposition}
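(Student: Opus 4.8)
The plan is to pass through the Hilbert series. From the minimal graded free resolution $0\to F_p\to\cdots\to F_0\to M\to 0$ with $F_i=\bigoplus_j R(-j)^{\beta_{i,j}}$ (here finiteness of $p=\pdim_R(M)$ is essential, so that $h(z)$ is a genuine Laurent polynomial), additivity of Hilbert series on exact sequences gives $H_M(z)=\sum_{i=0}^p(-1)^i H_{F_i}(z)=h(z)\,H_R(z)$. Now apply the Hilbert--Serre Theorem to both $R$ and $M$: writing $\dim(R)=d$, we get $H_R(z)=g(z)/(1-z)^d$ and $H_M(z)=f(z)/(1-z)^{d-c}$ with $g,f$ Laurent polynomials over $\BZ$ such that $g(1)=e(R)>0$ and $f(1)=e(M)>0$, using that $\dim(M)=d-c$. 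Clearing denominators in $H_M(z)=h(z)H_R(z)$ yields the polynomial identity $(1-z)^c f(z)=g(z)\,h(z)$.

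Part (a) is then immediate from this identity: since $g(1)\neq 0$, the factor $1-z$ does not divide $g(z)$, so $(1-z)^c$ must divide $h(z)$; and if $(1-z)^{c+1}$ divided $h(z)$, it would divide $g(z)h(z)=(1-z)^c f(z)$, forcing $(1-z)\mid f(z)$, contradicting $f(1)\neq 0$. Part (b) follows at once by applying Remark \ref{hklem} with $n=c$ and $b_{i,j}=\beta_{i,j}$: $(1-z)^c\mid h(z)$ is equivalent to $\sum_{i,j}(-1)^i j^l\beta_{i,j}=0$ for $l=0,\dots,c-1$.

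For part (c), I would use (a) to factor $h(z)=(1-z)^c\,\tilde h(z)$ with $\tilde h(z)$ a Laurent polynomial. The identity above becomes $f(z)=g(z)\,\tilde h(z)$, hence $e(M)=f(1)=g(1)\tilde h(1)=e(R)\,\tilde h(1)$, and it remains to compute $\tilde h(1)$. Differentiating $h(z)=(1-z)^c\tilde h(z)$ exactly $c$ times by the Leibniz rule and evaluating at $z=1$ annihilates every term except the one coming from the $c$-th derivative of $(1-z)^c$, giving $h^{(c)}(1)=(-1)^c c!\,\tilde h(1)$. On the other hand, differentiating the definition of $h$ term by term gives $h^{(c)}(1)=\sum_{i,j}(-1)^i\beta_{i,j}\,j(j-1)\cdots(j-c+1)$; expanding the falling factorial $j(j-1)\cdots(j-c+1)$ as a polynomial in $j$ with integer coefficients and leading coefficient $1$, the contributions of all powers $j^l$ with $l<c$ vanish by part (b), so $h^{(c)}(1)=\sum_{i,j}(-1)^i j^c\beta_{i,j}$. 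Substituting back, $e(M)=e(R)\,\tilde h(1)=\dfrac{e(R)}{c!}(-1)^c h^{(c)}(1)=\dfrac{e(R)}{c!}\sum_{i,j}(-1)^{i+c}j^c\beta_{i,j}$, which is the asserted formula.

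All the computations here are routine; the only points needing a little care are correctly invoking the non-vanishing $e(R),e(M)>0$ for the sharpness in (a), and tracking the vanishing of the lower-order falling-factorial terms in (c) (equivalently, using the triangular change of basis already recorded in Remark \ref{hklem}). The identification $\dim(M)=\dim(R)-c$ should also be flagged as the definition of codimension being used.
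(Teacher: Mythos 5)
Your proposal is correct and follows essentially the same route as the paper: additivity of Hilbert series on the minimal free resolution gives $(1-z)^c f(z) = g(z)h(z)$, from which (a) follows by the non-vanishing of $f(1)$ and $g(1)$, (b) by Remark \ref{hklem}, and (c) by the same Leibniz/falling-factorial computation of $h^{(c)}(1)$ using (b) to kill the lower-order terms. No gaps.
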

	    
\begin{proof} The proof of (b) follows from (a) and the previous remark. Let $\dim(R) = d$. Hence, the Hilbert series of $R$ and $M$ are of the form $H_R(z)=\frac{f(z)}{(1-z)^d}$, and $H_M(z)=\frac{g(z)}{(1-z)^{d-c}}$, where $e(R) = f(1)\neq 0 \neq g(1) = e(M)$.

Since a minimal resolution of $M$ over $R$ is of the form $$0\rightarrow\bigoplus\limits_jR(-j)^{\beta_{p,j}}\rightarrow\cdots\rightarrow\bigoplus\limits_j R(-j)^{\beta_{1,j}}\rightarrow\bigoplus\limits_jR(-j)^{\beta_{0,j}}\rightarrow M\rightarrow0,$$ by the additivity of Hilbert series over exact sequences, we get 

\begin{equation*}\label{H(M)1} \frac{g(z)}{(1-z)^{d-c}} = H_M(z)=  H_R(z)  \left(\sum\limits_{i=0}^p\sum\limits_{j \in \BZ}(-1)^j\beta_{i,j}z^j\right) = \frac{f(z)h(z)}{(1-z)^d}.
\end{equation*}
Thus, $g(z)(1-z)^c = f(z)h(z)$.
Since $f(1)\neq0$, we see that $(1-z)^c$ divides $h(z)$, and $g(1) \neq  0$ implies that $(1-z)^{c+1}$ does not divide $h(z)$, proving (a), and hence (b).

In order to prove (c), let $h(z)=(1-z)^ch_1(z)$. Therefore, $h_1(1) = $
$$\frac{(-1)^c}{c!} h^{(c)}(1) =  \frac{(-1)^c}{c!} \left(\sum\limits_{i,j}(-1)^ij(j-1)(j-2)\cdots(j-c+1)\beta_{i,j}\right) = \frac{1}{c!}\sum\limits_{i,j}(-1)^{i+c}j^c\beta_{i,j},$$ where the last equality is by (b). Thus, $e(M) = g(1) = e(R) h_1(1)$, proving (c).
\end{proof}

\begin{remark}\label{hkrmk}{\rm Let $R$ be a standard graded $\sk$-algebra, and $M$ be an $R$-module of finite projective dimension $p$ and $\codim(M) = c$. By Remark \ref{hklem} and Proposition \ref{hklem1}, we see that $M$ satisfies the Herzog-\ensuremath{\text{K}\ddot{\text{u}}\text{hl}} equations if and only if $p \leq c$. By the Auslander-Buchsbaum formula, since $\pdim_R(M) < \infty$, this is equivalent to $\cmd(M) \leq \cmd(R)$.
}\end{remark}

Observe that if $R$ is Cohen-Macaulay in the above remark, i.e., $\cmd(R) = 0$, then $M$ satisfies the Herzog-\ensuremath{\text{K}\ddot{\text{u}}\text{hl}} equations if and only if $\cmd(M)= 0$, i.e., $M$ is Cohen-Macaulay. Thus we have proved: 
	    
\begin{theorem}\label{thm3} Let $R$ be a standard graded Cohen-Macaulay $\sk$-algebra and $M$ an $R$-module of finite projective dimension $p$. Then $M$ is Cohen-Macaulay if and only if its graded Betti numbers satisfy the equations
$$\sum\limits_{i=0}^p\sum\limits_{j \in \BZ}(-1)^ij^l\beta_{i,j}=0, \quad l=0,\ldots,p-1.$$
\end{theorem}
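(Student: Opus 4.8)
The plan is to read Theorem \ref{thm3} off directly from Remark \ref{hkrmk}, so the ``proof'' amounts to unwinding equivalences already established. First I would observe that the displayed equations are exactly the Herzog--K\"uhl equations of Definition \ref{HKdef} for $M$ (here $\pdim_R(M) = p$). By Remark \ref{hkrmk}, which is itself deduced from Remark \ref{hklem} and Proposition \ref{hklem1}, the module $M$ satisfies these equations if and only if $\cmd(M) \leq \cmd(R)$.

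Next I would invoke the hypothesis that $R$ is Cohen--Macaulay, i.e.\ $\cmd(R) = 0$. Then $\cmd(M) \leq \cmd(R)$ becomes $\cmd(M) \leq 0$. Since $\cmd(M) = \dim(M) - \depth(M) \geq 0$ always holds, this forces $\cmd(M) = 0$, which is precisely the assertion that $M$ is Cohen--Macaulay; conversely, if $M$ is Cohen--Macaulay then $\cmd(M) = 0 = \cmd(R)$, so the equations hold. That completes the argument.

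If instead one prefers a proof not routed through Remark \ref{hkrmk}, the ingredients remain in place: with $c = \codim(M)$ and $h(z) = \sum_{i=0}^p \sum_{j \in \BZ} (-1)^i \beta_{i,j} z^j$, Proposition \ref{hklem1}(a) gives $(1-z)^c \mid h(z)$ while $(1-z)^{c+1} \nmid h(z)$, and Remark \ref{hklem} translates divisibility of $h(z)$ by $(1-z)^n$ into the vanishing of $\sum_{i,j}(-1)^i j^l \beta_{i,j}$ for $l = 0, \dots, n-1$. Hence the equations in the theorem (the case $n = p$) hold exactly when $p \leq c$. Finally, since $\pdim_R(M) < \infty$ and $R$ is Cohen--Macaulay, Auslander--Buchsbaum yields $p = \depth(R) - \depth(M) = \dim(R) - \depth(M)$, while $c = \dim(R) - \dim(M)$, so $p \leq c$ is equivalent to $\dim(M) \leq \depth(M)$, i.e.\ to $M$ being Cohen--Macaulay.

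There is no genuine obstacle: every step is a direct citation or an elementary manipulation. The one point to be careful about is the matching of index ranges --- that vanishing for $l = 0, \dots, p-1$ is \emph{equivalent} to $(1-z)^p \mid h(z)$, not merely implied by it --- which is exactly the content of Remark \ref{hklem} and rests on the invertibility of the lower-triangular change-of-basis matrix appearing there.
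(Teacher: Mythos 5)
Your proposal is correct and follows exactly the paper's route: the paper derives Theorem \ref{thm3} as an immediate consequence of Remark \ref{hkrmk} (the Herzog--K\"uhl equations hold if and only if $\cmd(M)\leq\cmd(R)$), specialized to $\cmd(R)=0$ together with $\cmd(M)\geq 0$. Your second, ``unpacked'' version is just the same chain of citations made explicit, so there is nothing further to add.
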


In particular, the above theorem extends the Boij-\ensuremath{\text{S}\ddot{\text{o}}\text{derberg}} result in Remark \ref{HKrmk}(b), characterizing graded Cohen-Macaulay $R$-modules by the Herzog-\ensuremath{\text{K}\ddot{\text{u}}\text{hl}} equations. 

\subsection{Herzog-\ensuremath{\text{K}\ddot{\text{u}}\text{hl}} Equations for Pure Modules}

We now show, in Theorem \ref{thm2}, that the Herzog-\ensuremath{\text{K}\ddot{\text{u}}\text{hl}} equations can be used to characterize pure modules $M$ of finite projective dimension over a standard graded $\sk$-algebra $R$, satisfying $\cmd(M) = \cmd(R)$. This generalizes the theorem of Herzog and \ensuremath{\text{K}\ddot{\text{u}}\text{hl}} in Remark \ref{purermk}(\ref{HK1}), and extends Theorem \ref{thm3} to pure modules over a standard graded $\sk$-algebra.

We now prove a key proposition, which is used in this section and the next.

\begin{proposition}\label{hklem2} Let $R$ be a standard graded $\sk$-algebra, and $M$ a pure $R$-module. Then $\codim(M) \leq \pdim_R(M)$. Moreover, if $\pdim_R(M)$ is finite, then $\cmd(R) \leq \cmd(M)$. 
\end{proposition}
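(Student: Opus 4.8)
The plan is to extract everything from the Laurent polynomial $h(z)=\sum_{i}\sum_{j}(-1)^i\beta^R_{i,j}(M)z^j$ attached to $M$ as in Proposition \ref{hklem1}, using purity to control the number of nonzero terms of $h(z)$. The first assertion is vacuous when $\pdim_R(M)=\infty$, so I would assume $p:=\pdim_R(M)<\infty$ and write the type of $M$ as ${\bf d}=(d_0,d_1,\ldots,d_p)$ with $d_0<d_1<\cdots<d_p$, so that purity gives $h(z)=\sum_{i=0}^p(-1)^i\beta_iz^{d_i}$ with $\beta_i:=\beta^R_{i,d_i}(M)>0$; in particular $h(z)$ is a nonzero Laurent polynomial with exactly $p+1$ nonzero terms. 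Writing $c=\codim(M)$, Proposition \ref{hklem1}(a) says that $(1-z)^c$ divides $h(z)$, i.e.\ that $h$ vanishes to order at least $c$ at $z=1$.

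The one substantive point is then the elementary claim that a nonzero Laurent polynomial with exactly $k$ nonzero terms vanishes to order at most $k-1$ at $z=1$; granting this with $k=p+1$ yields $c\le p$, which is the desired inequality $\codim(M)\le\pdim_R(M)$. To prove the claim I would first multiply by a power of $z$ to reduce to a genuine polynomial $g(z)=\sum_{i=1}^{k}c_iz^{a_i}$ with $0=a_1<a_2<\cdots<a_k$ and all $c_i\neq 0$, and then note that vanishing to order $\ge k$ at $z=1$ means $g(1)=g'(1)=\cdots=g^{(k-1)}(1)=0$; since $g^{(l)}(1)=\sum_i c_i\,a_i(a_i-1)\cdots(a_i-l+1)$, this asserts that the $k\times k$ matrix whose $(l,i)$ entry is the falling factorial $(a_i)_l$ annihilates the vector $(c_1,\ldots,c_k)$. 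That matrix factors as the product of the unitriangular change-of-basis matrix expressing falling factorials in terms of powers and the Vandermonde matrix of the distinct integers $a_1,\ldots,a_k$, hence is invertible, forcing all $c_i=0$ — a contradiction. (An induction on $k$, using that $g'$ has exactly $k-1$ nonzero terms and relating the orders of vanishing of $g$ and $g'$ at $1$, works equally well, as does the Descartes rule of signs bound on the number of positive real zeros counted with multiplicity.)

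For the ``moreover'' statement I would assume $p=\pdim_R(M)<\infty$. The Auslander--Buchsbaum formula gives $\depth(M)=\depth(R)-p$, and then a direct computation yields
$$\cmd(M)-\cmd(R)=\left(\dim(M)-\depth(M)\right)-\left(\dim(R)-\depth(R)\right)=p-\left(\dim(R)-\dim(M)\right)=p-\codim(M),$$
which is nonnegative by the first part; hence $\cmd(R)\le\cmd(M)$. I expect the only place where real work is needed is the sparse-polynomial claim in the second paragraph — everything else is formal, resting on Proposition \ref{hklem1} and the Auslander--Buchsbaum formula, both already available.
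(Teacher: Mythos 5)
Your proof is correct and follows essentially the same route as the paper: the paper applies Proposition \ref{hklem1}(b) to get the equations $\sum_i(-1)^id_i^l\beta_{i,d_i}=0$ for $l=0,\ldots,c-1$ and derives a contradiction from the invertibility of the $(p+1)\times(p+1)$ Vandermonde minors in the distinct integers $d_i$, which is exactly the linear algebra underlying your sparse-polynomial claim (your unitriangular-times-Vandermonde factorization is the content of Remark \ref{hklem}). The reduction of the ``moreover'' statement to $c\le p$ via Auslander--Buchsbaum is also the same.
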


\begin{proof} 
Note that if $\pdim_R(M)$ is infinite, there is nothing to prove. Hence, assume that $M$ is pure of type ${\bf d} = (d_0, \ldots, d_p)$, in particular, assume that $\pdim_R(M) = p$ is finite. With $\codim(M) = c$, we want to prove $c \leq p$.

If $p < c$, then, by Proposition \ref{hklem1}(b), $y_i = \beta_{i,d_i}$, $i = 0, \ldots, p$ is a non-trivial solution of the linear system of equations:
\begin{equation}\label{hk1}\sum\limits_i(-1)^id_i^ly_i=0, \quad  \,\, l=0,1,\ldots, c-1.
\end{equation}

The maximal minors of the coefficient matrix $A$ of this system are $(p+1) \times (p+1)$ Vandermonde matrices since $p+1 \leq c$. Thus, $d_i\neq d_j$ for all $i\neq j$ implies that $A$ has maximum rank $p+1$. This forces the system of equations (\ref{hk1}) to have only the trivial solution. This contradiction shows that $c \leq p$.

Finally, since $\pdim_R(M)$ is finite, then the Auslander-Buchsbaum formula implies that $\codim(M) \leq \pdim_R(M)$ is equivalent to $\cmd(R) \leq \cmd(M)$, proving the result.
\end{proof}

In the following example, we see that the inequality $\cmd(R) \leq \cmd(M)$ can be strict.
\begin{example}{\rm
Let $S = \sk[X_1,X_2]$, and $M = S/\langle X_1^2,X_1X_2 \rangle$. Then $M$ is pure of type ${\bf d} = (0,2,3)$.  In this case, $\cmd(M) = 1 > 0 = \cmd(S)$.

Note that $\beta^R(M)$ does not satisfy the Herzog-\ensuremath{\text{K}\ddot{\text{u}}\text{hl}} equations, since 
\[
\beta^R(M) =
\begin{tabular}{|c|ccc|}
\hline
\backslashbox{\emph j}{\emph i}&0&1&2\\
\hline
0&1&-&-\\
1&-&2&1\\
\hline
\end{tabular}
\quad \neq \quad
\begin{tabular}{|c|ccc|}
\hline
\backslashbox{\emph j}{\emph i}&0&1&2\\
\hline
0&1&-&-\\
1&-&3&2\\
\hline
\end{tabular}
= \beta_0^S(M)\pi({\bf d}).
\] 
}\end{example}

\begin{theorem}\label{thm2} Let $M$ be a pure module of type ${\bf d} = (d_0,\ldots,d_p)$ over a standard graded $\sk$-algebra $R$. Let $\pi({\bf d})$ and $\pi'({\bf d})$ be as in Remark \ref{purermk}(\ref{purediag}). Then the following are equivalent:\\
{\rm i)} $\cmd(M)=\cmd(R)$.\\
{\rm ii)} $\codim(M) = \pdim_R(M)$.\\
{\rm iii)} $\sum\limits_{i=0}^p(-1)^id_i^l\beta_{i,d_i}=0$ for $l=0,\ldots,p-1$, i.e. $M$ satisfies the Herzog-\ensuremath{\text{K}\ddot{\text{u}}\text{hl}} equations.\\
{\rm iv)} $\beta^R(M)=\beta_0^R(M)\pi({\bf d})$.\\ 
{\rm v)} $\beta^R(M)=\beta_p^R(M)\pi'({\bf d})$.
\end{theorem}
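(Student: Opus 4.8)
The plan is to prove the cycle of equivalences (i) $\Leftrightarrow$ (ii) $\Rightarrow$ (iii) $\Rightarrow$ (iv) $\Rightarrow$ (v) $\Rightarrow$ (ii), using the tools already assembled. The equivalence (i) $\Leftrightarrow$ (ii) is immediate from the Auslander--Buchsbaum formula: since $M$ is pure, $\pdim_R(M)$ is finite (by convention, or handled separately), and $\operatorname{depth}(R) - \operatorname{depth}(M) = \pdim_R(M)$, while $\codim(M) = \dim(R) - \dim(M)$ when $M$ is a faithful-enough module --- more precisely I should note $\codim(M) = \dim(R) - \dim(M)$ holds here because $R$ is a standard graded $\sk$-algebra and $M \ne 0$, or simply take $\codim(M)$ to mean $\dim(R)-\dim(M)$ as the paper does; then $\cmd(M) - \cmd(R) = \pdim_R(M) - \codim(M)$, so (i) and (ii) say the same thing. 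Combined with Proposition \ref{hklem2}, which gives $\codim(M) \le \pdim_R(M)$ unconditionally, we already know the inequality $\cmd(R) \le \cmd(M)$ always holds, and (i)/(ii) is precisely the equality case.

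Next, (ii) $\Rightarrow$ (iii): if $\codim(M) = p = \pdim_R(M)$, then Proposition \ref{hklem1}(b) applied with $c = p$ gives exactly the equations $\sum_{i,j}(-1)^i j^l \beta_{i,j} = 0$ for $l = 0, \ldots, p-1$; since $M$ is pure, $\beta_{i,j} = 0$ unless $j = d_i$, so the double sum collapses to $\sum_{i=0}^p (-1)^i d_i^l \beta_{i,d_i} = 0$, which is (iii). For (iii) $\Rightarrow$ (iv): assume the Herzog--K\"uhl equations hold for $M$. Set $y_i = \beta_{i,d_i}$; these satisfy the homogeneous Vandermonde-type system $\sum_i (-1)^i d_i^l y_i = 0$ for $l = 0, \ldots, p-1$, which is $p$ equations in the $p+1$ unknowns $y_0, \ldots, y_p$. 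Since the $d_i$ are distinct, the solution space is one-dimensional, and Cramer's rule (expanding the $p\times(p+1)$ coefficient matrix along each column, each maximal minor being a $p \times p$ Vandermonde determinant) forces $(-1)^i y_i$ proportional to $\prod_{k \ne i}(d_k - d_i)^{-1}$ times $\prod_{k \ne i} d_k$-type products; cleaning up the signs yields $y_i / y_0 = \prod_{k \ne 0, i} \left| \frac{d_k - d_0}{d_k - d_i} \right|$, i.e. $\beta_i^R(M) = \beta_0^R(M) \prod_{k\ne 0,i}\left|\frac{d_k-d_0}{d_k-d_i}\right|$. By the explicit formula for $\pi({\bf d})$ in Remark \ref{purermk}(\ref{purediag}), this is exactly $\beta^R(M) = \beta_0^R(M)\pi({\bf d})$, giving (iv); this same computation is what Remark \ref{HKrmk}(a) refers to.

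Then (iv) $\Rightarrow$ (v) is a one-line observation: $\pi({\bf d})$ and $\pi'({\bf d})$ are positive rational scalar multiples of one another (both being rescalings of the Betti table of the same pure Cohen--Macaulay $S$-module of type ${\bf d}$), so $\beta^R(M) = \beta_0^R(M)\pi({\bf d})$ immediately rewrites as $\beta^R(M) = \lambda \pi'({\bf d})$ for some positive rational $\lambda$, and comparing the $p$-th column (where $\pi'({\bf d})_{p,d_p} = 1$) identifies $\lambda = \beta_p^R(M)$. Finally, (v) $\Rightarrow$ (ii): if $\beta^R(M) = \beta_p^R(M)\pi'({\bf d})$, then the Betti table of $M$ is nonzero in columns $0$ through $p$ and is supported on the single degree sequence ${\bf d}$ of length $p$; applying Proposition \ref{hklem1}(b) in reverse --- that is, the Herzog--K\"uhl equations hold, so by Remark \ref{hkrmk} (or directly by the Vandermonde rank argument inside the proof of Proposition \ref{hklem2}) one gets $\pdim_R(M) \le \codim(M)$, which together with the always-true reverse inequality from Proposition \ref{hklem2} gives $\codim(M) = \pdim_R(M) = p$, which is (ii). I expect the main obstacle to be the bookkeeping in the step (iii) $\Rightarrow$ (iv): extracting the precise product formula with the correct absolute values and sign conventions from the Vandermonde system requires care, though it is entirely routine once set up, and I would either carry it out by Cramer's rule directly or cite the identical computation in \cite{BS}/\cite{HK} and indicate the one sign subtlety (the $(-1)^i$ making successive ratios positive after taking absolute values).
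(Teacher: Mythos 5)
Your proposal is correct and uses essentially the same ingredients as the paper's proof: Auslander--Buchsbaum for (i)$\Leftrightarrow$(ii), Proposition \ref{hklem1}(b) for (ii)$\Rightarrow$(iii), Remark \ref{hkrmk} together with Proposition \ref{hklem2} to close the loop, and Cramer's rule on the full-rank Vandermonde system for the pure-diagram formulas. The only difference is organizational --- you arrange the implications in a single cycle and derive (v) from (iv) via the proportionality of $\pi({\bf d})$ and $\pi'({\bf d})$, whereas the paper proves (iii)$\Leftrightarrow$(iv) and (iii)$\Leftrightarrow$(v) separately by choosing a different free variable --- which does not change the substance of the argument.
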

\begin{proof}
Since $M$ is a pure module of type ${\bf d}$, we have $\pdim_R(M) = p$ is finite. Hence, by the Auslander-Buchsbaum formula, (i) and (ii) are restatements of each other. 

Let $\codim(M) = c$. If (ii) is true, then Proposition \ref{hklem1}(b) shows that (iii) holds. In order to prove the converse, first note that by Remark \ref{hkrmk}, (iii) implies that $p \leq c$. Furthermore, since $M$ is a pure $R$-module, by Proposition \ref{hklem2}, we get $c \leq p$, proving (ii).

In order to see the equivalence of (iii) and (iv), note that the Herzog-\ensuremath{\text{K}\ddot{\text{u}}\text{hl}} equations are $p$ linear equations in $\beta_{0,d_0}, \ldots, \beta_{p,d_p}$, whose coefficient matrix is $
{\left[\begin{matrix} 1&\cdots&(-1)^{p}\\
d_0&\cdots&(-1)^{p}d_p\\
\vdots&\ddots&\vdots\\
d_0^{p-1}&\cdots&(-1)^{p}d_p^{p-1}
\end{matrix}\right].}$ Since $d_i \neq d_j$ for $i \neq j$, this a $p \times (p+1)$ matrix of full rank, and the solutions can be obtained by setting $\beta_{0,d_0}$ to be a free variable. Cramer's Rule proves the rest.

Similarly, setting $\beta_{p,d_p}$ to be a free variable proves the equivalence of (iii) and (v).
\end{proof}

\section{Applications}\label{App}
\subsection{Application to Multiplicity} In \cite{BS}, Boij and \ensuremath{\text{S}\ddot{\text{o}}\text{derberg}} compute the multiplicity of pure Cohen-Macaulay modules over polynomial rings. We use the above theorem to do the same for a pure module $M$ satisfying $\pdim_R(M) = \codim(M)$, over any standard graded $\sk$-algebra $R$.

\begin{corollary}\label{cor1}
Let $M$ be a pure module of type ${\bf d} = (d_0,\ldots,d_p)$ over a standard graded $\sk$-algebra $R$, with $\beta_0 = \beta_{0,d_0}^R(M)$. If $\cmd(R)=\cmd(M)$, then the multiplicity of $M$ is $$e(M)=e(R)\dfrac{\beta_0}{p!}\prod\limits_{j=1}^{p}(d_j-d_0).$$
\end{corollary}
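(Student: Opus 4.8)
The plan is to combine Theorem \ref{thm2} with Proposition \ref{hklem1}(c). Since $\cmd(R) = \cmd(M)$, Theorem \ref{thm2} tells us that $\codim(M) = \pdim_R(M) = p$, so $c = p$ in the notation of Proposition \ref{hklem1}. Moreover, the same theorem gives $\beta^R(M) = \beta_0 \pi({\bf d})$, which means $\beta_{i,d_i}^R(M) = \beta_0 \prod_{k \neq 0,i} \left| \frac{d_k - d_0}{d_k - d_i} \right|$ for all $i$, and all other graded Betti numbers vanish.

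The next step is to feed this into the multiplicity formula from Proposition \ref{hklem1}(c). With $c = p$, that formula reads
$$e(M) = \frac{e(R)}{p!}\left[\sum_{i=0}^{p} (-1)^{i+p} d_i^{\,p}\, \beta_{i,d_i}^R(M)\right] = \frac{e(R)\,\beta_0}{p!}\left[\sum_{i=0}^{p} (-1)^{i+p} d_i^{\,p} \prod_{k \neq 0,i}\left|\frac{d_k-d_0}{d_k-d_i}\right|\right].$$
So the whole problem reduces to the purely combinatorial identity
$$\sum_{i=0}^{p} (-1)^{i+p} d_i^{\,p} \prod_{k \neq 0,i}\left|\frac{d_k-d_0}{d_k-d_i}\right| = \prod_{j=1}^{p}(d_j - d_0).$$

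To prove this identity, I would recognize the left-hand side as (the top-degree coefficient of) a Lagrange interpolation. Writing $\prod_{k \neq 0,i}(d_k - d_0) = \frac{1}{d_i - d_0}\prod_{k \neq i}(d_k - d_0)$ and clearing the signs — using $d_0 < d_1 < \cdots < d_p$ to pin down that $(-1)^{i+p}\prod_{k\neq 0,i}\frac{d_k-d_0}{d_k-d_i}$ equals $\frac{\prod_{k\neq 0,i}(d_k-d_0)}{\prod_{k\neq i}(d_i-d_k)}$ up to the explicit sign bookkeeping already done in Remark \ref{purermk}(\ref{purediag}) — the sum becomes $\left(\prod_{j=1}^p (d_j-d_0)\right)\sum_{i=0}^p \frac{d_i^{\,p}}{(d_i-d_0)\prod_{k\neq i}(d_i-d_k)}$, and the remaining sum is the divided difference $[d_0,d_0,d_1,\ldots,d_p]\,x^p$ of the polynomial $x^p$ at the nodes $d_0$ (doubled), $d_1,\ldots,d_p$, hence equals $1$ since $x^p$ is monic of degree $p$ and there are $p+1$ distinct leading-order contributions; equivalently, evaluate the coefficient of $x^p$ on both sides of the Lagrange interpolation of $x^p$ through the $p+1$ nodes $d_0,\ldots,d_p$. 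Either route makes the identity transparent.

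The main obstacle I anticipate is not the algebra of the identity itself but the sign and absolute-value bookkeeping: Proposition \ref{hklem1}(c) is stated with a signed sum $(-1)^{i+p}$, while Theorem \ref{thm2}(iv) expresses the Betti numbers through $\pi({\bf d})$, which carries absolute values. Reconciling the two cleanly — i.e., checking that $(-1)^{i+p}\prod_{k\neq 0,i}\left|\frac{d_k-d_0}{d_k-d_i}\right|$ has exactly the sign needed so that the telescoping/interpolation collapses to a product with no stray sign — is where care is required, though since the ordering $d_0 < \cdots < d_p$ is part of the definition of a degree sequence, this is a finite and routine check. Once that is in hand, the corollary follows immediately.
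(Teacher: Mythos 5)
Your overall strategy is exactly the paper's: use Theorem \ref{thm2} to get $\codim(M)=\pdim_R(M)=p$ and $\beta^R(M)=\beta_0\,\pi({\bf d})$, substitute into Proposition \ref{hklem1}(c), and reduce everything to the identity $\sum_{i=0}^p(-1)^{i+p}d_i^p\prod_{k\neq 0,i}\left|\frac{d_k-d_0}{d_k-d_i}\right|=\prod_{j=1}^p(d_j-d_0)$. The paper closes by comparing the cofactor expansion of the $(p+1)\times(p+1)$ Vandermonde determinant with its product formula; your Lagrange-interpolation argument is the same identity in different clothing, so the route is essentially identical. There is, however, one concrete slip to repair in your reduction. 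After you pull out $\prod_{j=1}^p(d_j-d_0)$, the factor $(d_i-d_0)$ cancels: indeed $\prod_{k\neq 0,i}|d_k-d_i|=\prod_{k\neq i}|d_k-d_i|/(d_i-d_0)$ for $i\neq 0$, and the sign bookkeeping gives $(-1)^{i+p}/\prod_{k\neq i}|d_k-d_i|=1/\prod_{k\neq i}(d_i-d_k)$, so the remaining sum is $\sum_{i=0}^p d_i^p/\prod_{k\neq i}(d_i-d_k)$ and not $\sum_{i=0}^p d_i^p/\bigl((d_i-d_0)\prod_{k\neq i}(d_i-d_k)\bigr)$ --- the latter is not even defined at $i=0$. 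Correspondingly, the divided difference $[d_0,d_0,d_1,\ldots,d_p]\,x^p$ over $p+2$ nodes (counted with multiplicity) equals $0$ for a degree-$p$ polynomial, not $1$; it is only your second, ``equivalently'' route --- reading off the coefficient of $x^p$ in the Lagrange interpolant of $x^p$ through the $p+1$ distinct nodes $d_0,\ldots,d_p$ --- that correctly evaluates the sum to $1$ and completes the proof.
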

\begin{proof}
Since $\cmd(R)=\cmd(M)$, and $\pdim_R(M) = p$ is finite, we get $\codim(M) = p$. Hence, by Theorem \ref{thm2} and Proposition \ref{hklem1}, we have 
\begin{eqnarray*}
e(M)&=& \beta_0\frac{e(R)}{p!}\sum\limits_{i=0}^p (-1)^{i+p}d_i^p\prod\limits_{j\neq i, j=1}^p\left\vert \frac{d_j-d_0}{d_j-d_i}\right\vert\\
&=& e(R)\frac{\beta_0}{p!}\prod\limits_{j=1}^{p}(d_j-d_0)\left(\sum\limits_{i=0}^p\frac{(-1)^{i+p}d_i^p}{\prod\limits_{j\neq i,j=0}^p\left\vert d_j-d_i\right\vert}\right).
\end{eqnarray*}

Let $V$  be the $(p+1) \times (p+1)$ Vandermonde matrix with $(i,j)$th entry $d_{i-1}^{j-1}$. We know that $\det(V) = \prod\limits_{0 \leq j < k \leq p}(d_k - d_j)$. On the other hand, expanding along the last row, we get $$\det(V) = \sum\limits_{i =0}^p (-1)^{i+p}d_i^p \left( \prod\limits_{0 \leq j < k \leq p,\ j \neq i \neq k}(d_k - d_j)\right).$$ Dividing the second expression for $\det(V)$ by the first, we see that $\sum\limits_{i=0}^p(-1)^{i+p}\frac{d_i^p}{\prod\limits_{j\neq i,j=0}^p\left\vert d_j-d_i\right\vert} = 1$. Substituting this in the expression for $e(M)$ proves the corollary.
\end{proof}

\subsection{Applications to Cyclic Modules}\label{Cyclic}
In this subsection, we study pure cyclic modules. We first look at Betti diagrams of pure cyclic Cohen-Macaulay modules of projective dimension 2.

\begin{theorem}\label{cyclic} Let $R$ be a standard graded $\sk$-algebra, and $I$ a homogeneous ideal in $R$, such that $\codim(R/I) = \pdim_R(R/I) = 2$. If $R/I$ is a pure $R$-module, then there exists a regular sequence  $g_1,g_2$ such that $\beta^R(R/I)=\beta^R(R/\langle g_1,g_2\rangle^i )$ for some $i$.
\end{theorem}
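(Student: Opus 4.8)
The plan is to use Theorem \ref{thm2} to pin down the entire Betti table of $R/I$ from its type ${\bf d} = (d_0, d_1, d_2)$, and then to construct an explicit pair $g_1, g_2$ realizing the same table. Since $R/I$ is cyclic, $\beta_0^R(R/I) = \beta_{0,0}^R(R/I) = 1$, so $d_0 = 0$. Because $\codim(R/I) = \pdim_R(R/I) = 2$, hypothesis (ii) of Theorem \ref{thm2} holds, so $\beta^R(R/I) = \pi({\bf d})$, i.e. the graded Betti numbers are completely determined by $d_1$ and $d_2$ via the Herzog-K\"uhl formula: $\beta_{1,d_1} = \left|\frac{d_2 - d_1}{d_2}\right|^{-1}\cdot\left|\frac{d_2}{d_2-d_1}\right|$... more precisely $\beta_1 = \prod_{k \neq 1, 0}\left|\frac{d_k - d_0}{d_k - d_1}\right| = \frac{d_2}{d_2 - d_1}$ and $\beta_2 = \prod_{k \neq 2, 0}\left|\frac{d_k - d_0}{d_k - d_2}\right| = \frac{d_1}{d_2 - d_1}$. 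For these to be positive integers we need $(d_2 - d_1) \mid d_1$ and $(d_2 - d_1) \mid d_2$; writing $a = d_1/(d_2-d_1)$, one gets $d_1 = a(d_2 - d_1)$, hence $d_2 = (a+1)(d_2-d_1)$, so with $e := d_2 - d_1$ we have $d_1 = ae$, $d_2 = (a+1)e$, $\beta_1 = a+1$, $\beta_2 = a$.

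Next I would recognize this Betti table as that of a power of a complete intersection of codimension $2$. If $g_1, g_2$ is a regular sequence of forms of degree $e$ in $R$, then $R/\langle g_1, g_2\rangle$ is resolved by the Koszul complex, and the $i$-th power $\langle g_1, g_2\rangle^i$ has the Eagon-Northcott type resolution whose shape is well understood: $R/\langle g_1, g_2\rangle^i$ has projective dimension $2$, is Cohen-Macaulay of codimension $2$, and is pure. Its minimal free resolution has the form $0 \to R(-(i+1)e)^{i} \to R(-ie)^{i+1} \to R \to R/\langle g_1,g_2\rangle^i \to 0$; in particular it is pure of type $(0, ie, (i+1)e)$ with $\beta_1 = i+1$, $\beta_2 = i$. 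Matching against the previous paragraph, taking $i = a$ and the degree of the regular sequence to be $e = d_2 - d_1$ gives exactly the type $(0, ae, (a+1)e) = (0, d_1, d_2)$ and the Betti numbers $\beta_1 = a+1$, $\beta_2 = a$. Since both modules are pure of the same type with $\codim = \pdim = 2$, Theorem \ref{thm2}(iv) forces $\beta^R(R/I) = \pi({\bf d}) = \beta^R(R/\langle g_1, g_2\rangle^a)$.

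The remaining point is to guarantee that a regular sequence $g_1, g_2$ of forms of the common degree $e = d_2 - d_1$ actually exists in $R$. This is where $\codim(R/I) = 2$ is used: it forces $\dim R \geq 2$, so $R$ has at least two elements of a regular sequence, and since $\sk$ is infinite one can choose them to be general forms of any sufficiently large degree; the precise claim that a regular sequence of forms of the exact degree $e$ exists needs a small argument — one can take general linear combinations of a fixed homogeneous system of parameters raised to appropriate powers, or invoke prime avoidance over the infinite field $\sk$ to find $g_1$ avoiding the (finitely many) minimal primes of $R$ of maximal dimension among forms of degree $e$, then $g_2$ avoiding the minimal primes of $R/\langle g_1\rangle$ of top dimension. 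I expect this existence step — confirming a regular sequence in the prescribed single degree $e$, rather than just in "large enough" degree — to be the main technical obstacle, though it is fairly standard; one should also double-check the edge case where $R/\langle g_1, g_2\rangle^a$ is a complete intersection quotient itself (i.e. $a = 1$, so $d_1 = e$, $d_2 = 2e$, Betti numbers $\beta_1 = 2, \beta_2 = 1$, the Koszul complex).
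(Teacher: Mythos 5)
Your proof is correct and follows essentially the same route as the paper's: both use Theorem \ref{thm2} to force $\beta_1 = d_2/(d_2-d_1)$ and $\beta_2 = d_1/(d_2-d_1)$, deduce that $e = d_2-d_1$ divides both $d_1$ and $d_2$, and then match the Betti table of $R/I$ against that of $R/\langle g_1,g_2\rangle^{a}$ for a regular sequence $g_1,g_2$ of degree $e$, where $a = d_1/e$. One small correction to your existence step: what you actually need is $\depth(R)\ge 2$ (which follows from the Auslander--Buchsbaum formula since $\pdim_R(R/I)=2$), not $\dim(R)\ge 2$, and then one can simply take $e$-th powers of a regular sequence of linear forms; your prime-avoidance argument over minimal primes of maximal dimension would only produce a system of parameters, not necessarily a regular sequence, when $R$ is not Cohen--Macaulay.
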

\begin{proof} Let $R/I$ be a pure $R$-module of type $(0,d_1,d_2)$. Since $\codim(R/I) = \pdim_R(R/I)$ and $\beta^R_0(R/I)=1$, by Theoqrem \ref{thm2}, we have  $\beta^R_1(R/I)=d_2/(d_2-d_1)$ and $\beta^R_2(R/I)=d_1/(d_2-d_1)$. Therefore $r = d_2-d_1 =$ gcd$(d_1,d_2)$. 
	    
Write $d_1=rd$ and let $g_1,g_2$ be an $R$-regular sequence of degree $r$. By using induction on $d$, it can be seen that $R/\langle g_1,g_2\rangle^d$ is a pure $R$-module of type $(0,rd,rd+r)$. Therefore  $R/\langle g_1,g_2\rangle^d$ and $R/I$ are pure of type $(0,rd,rd+r)$, with $\beta_0^R(R/I) = \beta_0^R(R/\langle g_1,g_2\rangle^d) = 1$. Hence, by Theorem \ref{thm2}, $\beta^R(R/\langle g_1,g_2\rangle^d)=\beta^R(R/I)$.
\end{proof}

In the above theorem, we have seen that $\beta^R(R/\langle g_1,g_2\rangle^d)=\beta^R(R/I)$. One can ask: Is $R/I\simeq R/\langle g_1,g_2\rangle^d$? The following example shows that it need not be true.
	  
\begin{example}
	{\rm Let $S=\sk[X_1,X_2,X_3]$, and $I=\langle X_1X_2, X_2X_3, X_1X_3\rangle$. Then $S/I$ is a pure $S$-module of type $(0,2,3)$, and $\beta^S(S/I) = \beta^S(S/\langle X_1,X_2 \rangle^2)$, illustrating Theorem \ref{cyclic}.
	
	We know that if $g_1,g_2$ is a regular sequence,  then $S/\langle g_1,g_2\rangle ^i$ is a Cohen-Macaulay for all $i\in\BN$. Since $S/I^2$ is not Cohen-Macaulay, there does not exist any regular sequence $g_1,g_2$ such that $I$ isomorphic to $\langle g_1,g_2\rangle ^i$.}
\end{example}
	  
The following example shows that Theorem \ref{cyclic} need not hold if $\pdim_R(R/I) \geq 3$.
\begin{example} {\rm  Let $S=\sk[X_1,X_2,X_3]$ and $I=\langle X_1X_2, X_2X_3, X_1X_3,X_1^2-X_2^2,X_1^2-X_3^2\rangle$. Then $S/I$ is a pure  Gorenstein quotient of $S$ of type $(0,2,3,5)$.

Note that $\beta^S(S/I) \neq \beta^S(S/\langle g_1,g_2,g_3\rangle^i)$, for any $i$, where $g_1$, $g_2$, $g_3$ is an $S$-regular sequence. This follows since $\beta_1^S(S/I)=5$, and for any regular sequence   $g_1,g_2,g_3$, we have  $\beta_1^S(S/\langle g_1,g_2,g_3\rangle)=3$, and $\beta_1^S(S/\langle g_1,g_2,g_3\rangle^i)\geq 6$ for $i\geq2$.
}\end{example}	  
	    	    
We have seen in Theorem \ref{cyclic} that $\beta^R(R/I)=\beta^R(R/\langle g_1,g_2\rangle^i)$. Since $R/\langle g_1,g_2\rangle^i$ is a pure module for each $i$, we have the following natural question.
	    
\begin{question} {\rm Let $R$ be a graded $\sk$-algebra and $I$ be a homogeneous ideal in $R$ such that $R/I$ is a pure $R$-module. If $\cmd(R/I)=\cmd(R)$, then is $R/I^i$ a pure $R$-module for every $i$?}
\end{question}

\subsection{Modules over Polynomial Rings}\label{poly}

We first give a sufficient condition for a pure module over a polynomial to be Cohen-Macaulay.  This leads to conditions on the degree sequences, which force pure cyclic quotients to be Gorenstein, or a complete intersection.	    

\begin{theorem}
Let $S$ be a polynomial ring, ${\bf d}=(d_0,d_1,\dots,d_p)$ be a degree sequence with $1 \leq \pi'({\bf d})_{0,d_0}$, and $M$ a pure $S$ module of type ${\bf d}$. If $\beta_{0,d_0}(M) \leq \beta_{p,d_p}(M)$, then $M$ is Cohen-Macaulay.
\end{theorem}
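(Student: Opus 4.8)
The plan is to compare the Betti table of $M$ with the pure diagram $\pi'(\mathbf{d})$ and derive a contradiction unless $M$ is Cohen-Macaulay. Since $S$ is a polynomial ring, $\cmd(S) = 0$, so by Theorem \ref{thm3} (or Theorem \ref{thm2}) the module $M$ is Cohen-Macaulay if and only if $\codim(M) = \pdim_S(M) = p$; equivalently, by Proposition \ref{hklem2}, it suffices to rule out $\codim(M) < p$. So assume for contradiction that $M$ is not Cohen-Macaulay, i.e., $\codim(M) = c < p$.

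First I would invoke the Boij--S\"oderberg decomposition in Remark \ref{purermk}(\ref{ES2}), in its $\pi'$-form: there exist finitely many degree sequences $\mathbf{e}$ of length $s$ with $c \leq s \leq p$ and positive rationals $c'_{\mathbf{e}}$ such that $\beta^S(M) = \sum_{\mathbf{e}} c'_{\mathbf{e}} \pi'(\mathbf{e})$. Since $M$ is pure of type $\mathbf{d}$, every $\pi'(\mathbf{e})$ appearing must have its nonzero entries located in rows dictated by $\mathbf{d}$; in particular each such $\mathbf{e}$ is a subsequence of $\mathbf{d}$ (its entries sit in the degrees $d_0, \ldots, d_p$), and because the top of the resolution of $M$ sits in homological degree $p$ with shift $d_p$, at least one $\mathbf{e}$ must have length exactly $p$ and equal $\mathbf{d}$ itself, while — since $M$ is not Cohen-Macaulay — at least one other $\mathbf{e}$ of length $< p$ must occur with positive coefficient. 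Comparing the entry in homological degree $0$: $\beta_{0,d_0}^S(M) = \sum_{\mathbf{e}} c'_{\mathbf{e}} \pi'(\mathbf{e})_{0,d_0}$, and comparing in homological degree $p$: $\beta_{p,d_p}^S(M) = c'_{\mathbf{d}} \pi'(\mathbf{d})_{p,d_p} = c'_{\mathbf{d}}$, since only the full-length sequence $\mathbf{d}$ can contribute a nonzero entry in homological degree $p$ (a shorter $\mathbf{e} \subseteq \mathbf{d}$ has no term there), and $\pi'(\mathbf{d})_{p,d_p} = 1$ by the normalization in Remark \ref{purermk}(\ref{purediag}).

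The key numerical step is then: $\pi'(\mathbf{e})_{0,d_0} \geq 1$ for \emph{every} $\mathbf{e}$ of length $\leq p$ that is a subsequence of $\mathbf{d}$ containing $d_0$. Indeed $\pi'(\mathbf{e})_{0,d_0} = \prod_{k \neq 0} |(e_k - e_{\text{last}})/(e_k - d_0)|$; since $\{e_k\} \subseteq \{d_j\}$ and the last term $e_{\text{last}} \leq d_p$, the telescoping/monotonicity argument (each factor of the form $|e_k - e_{\text{last}}|/|e_k - d_0| \ge$ corresponding factor for $\mathbf{d}$, because shrinking the spread of the degree sequence only increases the $0$-th normalized Betti number) gives $\pi'(\mathbf{e})_{0,d_0} \geq \pi'(\mathbf{d})_{0,d_0} \geq 1$ by hypothesis. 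Hence $\beta_{0,d_0}^S(M) = \sum_{\mathbf{e}} c'_{\mathbf{e}}\, \pi'(\mathbf{e})_{0,d_0} \geq \sum_{\mathbf{e}} c'_{\mathbf{e}} > c'_{\mathbf{d}} = \beta_{p,d_p}^S(M)$, where the strict inequality uses the presence of at least one short $\mathbf{e}$ with $c'_{\mathbf{e}} > 0$. This contradicts the hypothesis $\beta_{0,d_0}(M) \leq \beta_{p,d_p}(M)$, so $M$ must be Cohen-Macaulay.

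The main obstacle I anticipate is making rigorous the claim that $\pi'(\mathbf{e})_{0,d_0} \geq \pi'(\mathbf{d})_{0,d_0}$ whenever $\mathbf{e}$ is a proper subsequence of $\mathbf{d}$ — i.e., the monotonicity of the normalized $0$-th Betti entry under passing to subsequences. One clean way is to argue one index at a time: deleting a single $d_j$ (with $0 < j$, $j$ not the last index) from $\mathbf{d}$ multiplies $\pi'(\cdot)_{0,d_0}$ by a factor $\prod$ over the remaining $k$ of $|e_k - e_{\text{new last}}|/|e_k - e_{\text{old last}}| \cdot |d_j - d_0| / |d_j - d_{\text{old last}}|$-type corrections, and a careful sign/size bookkeeping shows this factor is $\geq 1$; deleting the last index is even easier since it only shortens the product of factors each $\geq 1$ after accounting for the $d_0$-normalization. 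I would also need to double-check the bookkeeping that forces $\mathbf{d}$ itself (not just some length-$p$ sequence) to appear in the decomposition, and that no shorter sequence contributes in homological degree $p$ — both follow from the shape of a pure resolution of type $\mathbf{d}$ and the fact that the pure diagrams in the decomposition have disjoint "staircase" supports compatible with $\beta^S(M)$.
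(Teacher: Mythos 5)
Your overall strategy --- decompose $\beta^S(M)$ into pure diagrams $\pi'(\mathbf{e})$, note that only the full sequence $\mathbf{d}$ contributes in homological degree $p$ so that $c'_{\mathbf{d}} = \beta_{p,d_p}(M)$, and then compare with the entry at $(0,d_0)$ --- is the same as the paper's. But the step you yourself flag as the main obstacle is in fact false. You claim $\pi'(\mathbf{e})_{0,d_0} \geq \pi'(\mathbf{d})_{0,d_0} \geq 1$ for every shorter sequence $\mathbf{e}$ in the decomposition (these are necessarily the initial truncations $(d_0,\dots,d_s)$ of $\mathbf{d}$, not arbitrary subsequences, since positivity of the coefficients forces the support of each $\pi'(\mathbf{e})$ to lie inside the support of $\beta^S(M)$). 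Truncating the top of the degree sequence shrinks every numerator $|d_k - d_{\text{last}}|$ in the product defining $\pi'(\cdot)_{0,d_0}$, so this quantity can drop, not rise. Concretely, for $\mathbf{d}=(0,3,4,100)$ one has $\pi'(\mathbf{d})_{0,0} = \frac{97}{3}\cdot\frac{96}{4} = 776 \geq 1$, while the truncation $\mathbf{e}=(0,3,4)$ has $\pi'(\mathbf{e})_{0,0} = \frac{1}{3} < 1$. So neither the monotonicity claim nor the weaker inequality $\pi'(\mathbf{e})_{0,d_0}\ge 1$ that your chain actually uses is available, and the step $\sum_{\mathbf{e}} c'_{\mathbf{e}}\pi'(\mathbf{e})_{0,d_0} \geq \sum_{\mathbf{e}} c'_{\mathbf{e}}$ is unjustified.

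The repair is to ask less of the shorter diagrams: since every term is nonnegative, discard them and keep only the contribution of the full sequence, giving
$\beta_{0,d_0}(M) \;\ge\; c'_{\mathbf{d}}\,\pi'(\mathbf{d})_{0,d_0} \;\ge\; c'_{\mathbf{d}} \;=\; \beta_{p,d_p}(M) \;\ge\; \beta_{0,d_0}(M)$,
where the middle inequality uses the hypothesis $\pi'(\mathbf{d})_{0,d_0}\ge 1$ on the full sequence only. Equality then holds throughout, so $c'_{\mathbf{e}}\,\pi'(\mathbf{e})_{0,d_0}=0$ for every proper truncation $\mathbf{e}$; since $\pi'(\mathbf{e})_{0,d_0}>0$, all those coefficients vanish, hence $\beta^S(M)=\beta_{p,d_p}(M)\,\pi'(\mathbf{d})$ and $M$ is Cohen--Macaulay by Remark \ref{purermk}(\ref{HK1}). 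This is exactly the paper's argument; it needs no contradiction hypothesis and no comparison between distinct pure diagrams.
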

\begin{proof}
By Remark \ref{purermk}(\ref{ES2}), we can write 
$$\beta^S(M)=c_1\pi'({\bf d})+c_2\pi'({d_0, d_1,\dots,d_{p-1}})+\dots+ c_{p-1}\pi'({d_0, d_1})+ c_{p}\pi'({d_0}),$$ where $c_i \in \BQ_{\geq 0}$, with $c_1 > 0$. The above equation shows that $c_1=\beta^S_{p,d_p}(M)$. Now 
$$\beta^S_{0,d_0}(R)= c_1\pi'({\bf d})_{0,d_0}+c_2\pi'({d_0, d_1,\dots,d_{p-1}})_{0,d_0}+\dots+ c_{p-1}\pi'({d_0, d_1})_{0,d_0}+ c_{p}\pi'({d_0})_{0,d_0}$$ 
$$\geq c_1\pi'({\bf d})_{0,d_0} \geq c_1 =\beta^S_{p,d_p}(M) \geq \beta^S_{0,d_0}(M).$$ 
Thus $\pi'({\bf d})_{0,d_0} = 1$, $\beta^S_{0,d_0}(M) = \beta^S_{p,d_p}(M)$. Moreover, $\beta^S(M) = \beta^S_{p,d_p}(M)\pi'({\bf d})$, since $c_i = 0$ for $i \geq 2$. Therefore, $M$ is Cohen-Macaulay by Remark \ref{purermk}(\ref{HK1}) and (\ref{purediag}). 
\end{proof}

\begin{corollary}\label{cyclicapp}
Let $S$ be a polynomial ring, ${\bf d}=(0,d_1,\dots,d_p)$ be a degree sequence, and ${\bf e} = (e_1,\ldots, e_p)$, where $e_i = d_i - d_{i-1}$ for $i = 1,\ldots,p$. Let $I \subset S$ be homogeneous such that $R=S/I$ is a pure $S$-module of type ${\bf d}$. Then the following hold:
\begin{enumerate}[\rm a)]
\item  If $\pi'({\bf d})_{0,d_0} \geq 1$, then $R$ is Gorenstein.
\item If ${\bf e}$ is non-decreasing, then $e_1 = \cdots = e_p$, and $R$ is a complete intersection.
\end{enumerate}   
\end{corollary}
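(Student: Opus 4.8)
\textbf{Proof proposal for Corollary \ref{cyclicapp}.}

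The plan is to deduce both parts from the preceding theorem (the sufficient condition for a pure module over a polynomial ring to be Cohen-Macaulay) applied to $R = S/I$, together with the explicit formula for the pure diagram $\pi'({\bf d})$ in Remark \ref{purermk}(\ref{purediag}). Since $R = S/I$ is cyclic, $\beta^S_{0,0}(R) = 1$, so automatically $\beta^S_{0,d_0}(R) \leq \beta^S_{p,d_p}(R)$; thus whenever $\pi'({\bf d})_{0,d_0} \geq 1$, the theorem applies and gives that $R$ is Cohen-Macaulay, hence (by Theorem \ref{thm2} / Remark \ref{purermk}(\ref{HK1})) $\beta^S(R) = \beta^S_{p,d_p}(R)\,\pi'({\bf d})$, and moreover the chain of inequalities in the theorem's proof forces $\pi'({\bf d})_{0,d_0} = 1$ and $\beta^S_{p,d_p}(R) = \beta^S_{0,d_0}(R) = 1$. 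So in case (a) the resolution of $R$ has $\beta^S_0 = \beta^S_p = 1$; a cyclic module of finite projective dimension whose last Betti number is $1$ has a self-dual resolution, i.e.\ $R$ is Gorenstein. I would state this last implication citing the standard characterization ($\operatorname{Ext}^c_S(R,S)$ cyclic $\Leftrightarrow$ $R$ Gorenstein, for $R$ Cohen-Macaulay of codimension $c = p$), which applies since we have just shown $R$ is Cohen-Macaulay with $\codim(R) = \pdim_S(R) = p$.

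For part (b) I would first translate the hypothesis "${\bf e}$ non-decreasing" into a statement about $\pi'({\bf d})_{0,d_0}$, namely that it is $\geq 1$, so that (a) applies. By the formula in Remark \ref{purermk}(\ref{purediag}) with $d_0 = 0$,
\[
\pi'({\bf d})_{0,0} = \prod_{k \neq 0, p} \left| \frac{d_k - d_p}{d_k - 0} \right| = \prod_{k=1}^{p-1} \frac{d_p - d_k}{d_k}.
\]
Writing $d_k = e_1 + \cdots + e_k$, the factor indexed by $k$ is $(e_{k+1} + \cdots + e_p)/(e_1 + \cdots + e_k)$, which is a ratio of a sum of $p-k$ of the $e_i$'s over a sum of $k$ of them; I would check that when the $e_i$ are non-decreasing each such factor is $\geq 1$ (roughly: pair up terms using monotonicity, or bound the numerator below by $(p-k)e_k$ and the denominator above by... — actually the clean argument is to note $e_{k+1} + \cdots + e_p \geq (p-k)e_{k+1} \geq \cdots$; I will need to be slightly careful and instead compare $d_p - d_k \geq d_k$ for $k \leq p/2$ and handle the symmetric product more cleverly, since $\pi'$ is the product over all $k \neq 0, p$, not a single factor). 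The cleanest route: use that $\pi'({\bf d})$ is a positive multiple of $\pi({\bf d})$, and $\pi({\bf d})_{0,0} = 1$ always, so $\pi'({\bf d})_{0,d_0} = \beta_0^S(M')/\beta_p^S(M')$ for a pure CM module $M'$ of type ${\bf d}$; then monotonicity of ${\bf e}$ should give $\beta_0 \geq \beta_p$ via the Herzog-Kühl formulas, i.e.\ $\prod_{j\neq 0}|d_j - d_0| \geq \prod_{j \neq p}|d_j - d_p|$, which factor-by-factor reads $\prod_{j=1}^p d_j \geq \prod_{j=0}^{p-1}(d_p - d_j)$, and each matched pair $d_{p-i}$ vs.\ $d_p - d_i$ satisfies $d_{p-i} \geq d_p - d_i$ precisely because ${\bf e}$ is non-decreasing (the larger gaps are on the right). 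That gives $\pi'({\bf d})_{0,d_0} \geq 1$, so by (a) $R$ is Gorenstein, hence in particular Cohen-Macaulay with a self-dual resolution; a self-dual pure resolution forces $\beta_i = \beta_{p-i}$, and combined with the Herzog-Kühl equations this forces the degree sequence to be symmetric, $d_i + d_{p-i} = d_p$, which is $e_1 = \cdots = e_p$. Finally, a Gorenstein cyclic quotient with $\beta_1 = p = \codim$ is a complete intersection (Gorenstein codimension equal to minimal number of generators of $I$).

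The main obstacle I anticipate is part (b): getting from "Gorenstein" (plus self-duality of the resolution) to the strong conclusion $e_1 = \cdots = e_p$ and then to "complete intersection." The self-duality gives $d_i + d_{p-i} = \text{const}$ only if one knows the shifts in the dual resolution are $-d_{p-i} + (\text{twist})$, which is standard for Gorenstein cyclic modules, so the symmetry of ${\bf d}$ should drop out; but I want to be careful that "self-dual resolution" genuinely forces the twist to be a single integer (it does, because $\operatorname{Ext}^p_S(R,S)$ is cyclic for $R$ Gorenstein). Then $e_i = d_i - d_{i-1}$ non-decreasing together with $e_i = e_{p+1-i}$ (from symmetry of ${\bf d}$) immediately forces all $e_i$ equal. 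Passing to "complete intersection" then uses that a codimension-$p$ Gorenstein ideal minimally generated by $p$ elements is generated by a regular sequence — this is elementary (those $p$ generators cut out something of the right codimension, so they form a regular sequence in the Cohen-Macaulay ring $S$). I would present part (b) in roughly these steps: (1) ${\bf e}$ non-decreasing $\Rightarrow \pi'({\bf d})_{0,d_0} \geq 1$ by the factor-pairing computation above; (2) apply (a) to get $R$ Gorenstein, hence Cohen-Macaulay with $\beta_0 = \beta_p = 1$ and self-dual resolution; (3) self-duality $+$ Herzog-Kühl $\Rightarrow$ ${\bf d}$ symmetric $\Rightarrow e_1 = \cdots = e_p$; (4) $\mu(I) = \beta_1^S(R) = p = \codim(I)$ in a Cohen-Macaulay ring $\Rightarrow$ $I$ is a complete intersection.
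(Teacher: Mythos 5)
Your part (a) and the first half of part (b) are essentially the paper's own argument: since $R=S/I$ is cyclic, $\beta^S_{0,d_0}(R)=1\le\beta^S_{p,d_p}(R)$, so the preceding theorem applies as soon as $\pi'({\bf d})_{0,d_0}\ge 1$; its proof then yields $\pi'({\bf d})_{0,d_0}=1$ and $\beta^S_{p,d_p}(R)=\beta^S_{0,d_0}(R)=1$, and ``Cohen--Macaulay with $\beta_p=1$'' gives Gorenstein exactly as in the paper. One caution in your step (1): the pairing of $d_{p-i}$ against $d_p-d_i$ is precisely the paper's reindexing $\prod_{j=1}^{p-1}\tfrac{d_p-d_j}{d_j}=\prod_{j=1}^{p-1}\tfrac{d_p-d_j}{d_{p-j}}$, but both of your intermediate inequalities are stated backwards. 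Non-decreasing ${\bf e}$ gives $d_p-d_i=e_{i+1}+\cdots+e_p\ge e_1+\cdots+e_{p-i}=d_{p-i}$, not $d_{p-i}\ge d_p-d_i$; and correspondingly $\beta_0\ge\beta_p$ translates via Herzog--K\"uhl to $\prod_{j\ne 0}|d_j-d_0|\le\prod_{j\ne p}|d_j-d_p|$, not $\ge$. The two reversals cancel, so your conclusion $\pi'({\bf d})_{0,d_0}\ge 1$ is correct, but each intermediate claim as literally written is false and should be flipped.

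Where you genuinely diverge is the deduction $e_1=\cdots=e_p$. You route through Gorenstein duality: $\beta_p=1$ gives a twisted self-dual resolution, hence $d_i+d_{p-i}=d_p$, hence $e_i=e_{p+1-i}$, which together with monotonicity forces all $e_i$ equal. This is valid, but it imports the self-duality of resolutions of Gorenstein quotients. The paper gets the same conclusion more cheaply: once $\pi'({\bf d})_{0,0}=\prod_{j=1}^{p-1}\tfrac{d_p-d_j}{d_{p-j}}=1$ and every factor is $\ge 1$, every factor equals $1$, so $d_p=d_j+d_{p-j}$ for all $j$; already $e_1=e_p$ plus monotonicity forces equality. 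Finally, your step (4) asserts $\mu(I)=\beta_1^S(R)=p$ without justification; Gorenstein of codimension $p$ alone does not give this. You need the Herzog--K\"uhl formula applied to the now-known arithmetic degree sequence $(0,d,2d,\ldots,pd)$, where $\beta_1=\prod_{k=2}^{p}\tfrac{kd}{(k-1)d}=p$ telescopes; with that computation included, the ``$p$ generators in codimension $p$ means regular sequence'' conclusion is fine.
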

	    
\begin{proof} a) Note that $R$ is Gorenstein if and only if $R$ is Cohen-Macaulay, and $\beta^S_p(R) = 1$. Both the latter conditions hold by the previous theorem, hence $R$ is Gorenstein. 

\noindent
b) We have $d_p = \sum_{i  =1}^p e_i =   \sum_{i  = 1}^j e_i + \sum_{i  = j+1}^p e_i \geq \sum_{i  = 1}^j e_i + \sum_{i  = 1}^{p - j} e_i = d_j + d_{p-j},$ for each $j$, since ${\bf e}$ is non-decreasing. In particular, $\frac{d_p - d_j}{d_{p-j}} \geq 1$, with equality holding for each $j$ if and only if $e_1 = \cdots = e_p$. 

Now, by definition, $\pi'({\bf d})_{0,0} = \prod\limits_{j = 1}^{p-1} \left( \dfrac{d_p - d_j}{d_{j}} \right) = \prod\limits_{j = 1}^{p-1} \left( \dfrac{d_p - d_j}{d_{p-j}} \right) \geq 1$. Hence, by the previous theorem, $\pi'({\bf d})_{0,0} = 1$, which forces $\frac{d_p - d_j}{d_{p-j}} = 1$ for each $j$, and hence, we see that $e_1 = \cdots = e_p$.

Thus, if $d_1 = d$, then ${\bf d} = (0, d, 2d, \ldots, pd)$. We see that $I$ is generated by a regular sequence, since $\mu(I) = \beta_1^S(R) = \beta_0^S(R)\prod\limits_{k = 2}^p\left\vert\dfrac{d_k}{d_k - d_1}\right\vert = p = \codim(R)$. Thus $R$ is a complete intersection.
\end{proof}

\subsection{Characterization of Cohen-Macaulay Rings }\label{CM}

In Theorem \ref{thm1}, we give a characterization of Cohen-Macaulay graded $\sk$-algebras in terms of pure Cohen-Macaulay modules. We first prove the following proposition.	    

\begin{proposition}\label{prop}
Let $R$ be a standard graded Cohen-Macaulay $\sk$-algebra of dimension $n$, and ${\bf d} = (d_0,d_1,\ldots, d_p)$, $p \leq n$, be a degree sequence. Then there exists a pure Cohen-Macaulay $R$-module of type ${\bf d}$.
\end{proposition}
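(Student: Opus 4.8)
The plan is to reduce to the Eisenbud--Schreyer existence result over a polynomial ring, stated in Remark \ref{purermk}(\ref{ES1}), by passing through a Noether normalization of $R$. Since $R$ is a standard graded Cohen--Macaulay $\sk$-algebra of dimension $n$ and $\sk$ is infinite, I can choose a homogeneous system of parameters $\theta_1,\ldots,\theta_n$ of degree $1$, giving a graded Noether normalization $S = \sk[\theta_1,\ldots,\theta_n] \hookrightarrow R$; because $R$ is Cohen--Macaulay, $R$ is a finitely generated \emph{free} graded $S$-module. Given the degree sequence ${\bf d} = (d_0,\ldots,d_p)$ with $p \leq n = \dim S$, apply Eisenbud--Schreyer to get a pure Cohen--Macaulay $S$-module $N$ of type ${\bf d}$, and set $M = R \otimes_S N$.

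The key steps, in order, are as follows. First, verify that $M = R\otimes_S N$ is Cohen--Macaulay as an $R$-module: since $R$ is free (hence flat) over $S$, and $N$ is a Cohen--Macaulay $S$-module, base change preserves depth and dimension over the faithfully flat extension $S \to R$; concretely, $\dim_R M = \dim_S N + (\dim R - \dim S) = \dim_S N$ and similarly for depth, because $R$ is free over $S$, so $\cmd_R(M) = \cmd_S(N) = 0$. Alternatively, one notes $M$ is a maximal Cohen--Macaulay $S$-module (if $N$ has maximal dimension) or handles the general dimension count directly. Second, compute a minimal graded $R$-free resolution of $M$: take the minimal graded $S$-free resolution $F_\bullet \to N$, which is pure of type ${\bf d}$ with terms $F_i = S(-d_i)^{\beta_i}$, and apply $R\otimes_S -$. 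Since $R$ is flat over $S$, $R\otimes_S F_\bullet$ is an $R$-free resolution of $M$ with terms $R(-d_i)^{\beta_i}$; and since the differentials of $F_\bullet$ have entries in the irrelevant maximal ideal of $S$, which maps into the irrelevant maximal ideal of $R$, this resolution is still minimal. Hence $M$ is a pure $R$-module of type ${\bf d}$, and it is Cohen--Macaulay, as required.

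The main obstacle I anticipate is the existence of a \emph{degree-one} homogeneous system of parameters, i.e.\ making the Noether normalization $S$ genuinely standard graded so that the shifts $d_i$ over $S$ transfer verbatim to shifts over $R$ without rescaling. This is exactly where the hypothesis that $\sk$ is infinite (stated in the Notation) is used: a generic $\sk$-linear choice of $n$ elements of $R_1$ forms a homogeneous system of parameters, by the standard prime-avoidance/generic-hyperplane-section argument for standard graded algebras over an infinite field. One should also double-check the flatness claim --- $R$ is Cohen--Macaulay and module-finite over the polynomial ring $S$ with $\dim R = \dim S$, so by the Auslander--Buchsbaum formula (or the graded version of "Cohen--Macaulay $=$ free over a Noether normalization") $R$ is a free $S$-module --- and that base change along $S \to R$ behaves well on both depth and minimal resolutions, which are routine once flatness and the degree-one normalization are in hand.
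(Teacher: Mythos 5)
Your proposal is correct and follows essentially the same route as the paper: a linear Noether normalization $S\hookrightarrow R$ (using that $\sk$ is infinite), freeness of $R$ over $S$ from the Cohen--Macaulay hypothesis, Eisenbud--Schreyer over $S$, and base change $R\otimes_S-$ to transport the pure Cohen--Macaulay module to $R$. You simply spell out in more detail the verification that purity, minimality, and Cohen--Macaulayness are preserved under the flat base change, which the paper leaves implicit.
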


\begin{proof}
Let $\depth(R) = r$. Since $\sk$ is infinite, there exists a regular sequence $l_1,\ldots,l_r$ of linear forms in $R$. Since $R$ is Cohen-Macaulay, $R$ is free of finite rank over $S=\sk[l_1,\dots,l_r]$, which is isomorphic to a polynomial ring over $\sk$. 

By Remark \ref{purermk}(\ref{ES1}), there exists a Cohen-Macaulay $S$-module $M$, which has a pure resolution over $S$ of type ${\bf d}$. Then $M\otimes_SR$ is a pure Cohen-Macaulay $R$-module of type ${\bf d}$.
\end{proof}

The next theorem follows from Propositions \ref{hklem2} and \ref{prop}.

\begin{theorem}\label{thm1}
Let $R$ be a standard graded $\sk$-algebra. Then the following are equivalent:\\
{\rm i)} there exists a pure Cohen-Macaulay $R$-module of type ${\bf d}$, for every degree sequence ${\bf d}$ of length at most $\depth(R)$.\\
{\rm ii)} there exists a pure Cohen-Macaulay $R$-module of finite projective dimension.\\
{\rm iii)} $R$ is Cohen-Macaulay.
\end{theorem}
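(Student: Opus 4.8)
The plan is to prove the equivalence by establishing the cycle (iii) $\Rightarrow$ (i) $\Rightarrow$ (ii) $\Rightarrow$ (iii). The implication (iii) $\Rightarrow$ (i) is exactly Proposition \ref{prop}: if $R$ is Cohen-Macaulay, then for any degree sequence ${\bf d}$ of length at most $\depth(R) = \dim(R)$, there is a pure Cohen-Macaulay $R$-module of type ${\bf d}$, obtained by base-changing a pure Cohen-Macaulay module over a polynomial subring generated by a linear system of parameters.

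For (i) $\Rightarrow$ (ii), the first thing I would note is that the collection of degree sequences of length at most $\depth(R)$ is nonempty — for instance, any strictly increasing integer tuple $(d_0, \ldots, d_p)$ with $p \leq \depth(R)$ works. So (i) immediately produces a pure Cohen-Macaulay $R$-module; the only thing requiring comment is that such a module has finite projective dimension. This is where I would invoke the fact that the module is obtained with a genuine (finite) degree sequence of length $p$, so by the definition of "pure resolution of type ${\bf d}$" in Definition \ref{puredef}, $\pdim_R(M) = p < \infty$. Hence (ii) holds. (One could also shortcut and note that (i) with the shortest possible degree sequence already gives a module of projective dimension $0$ or more, but the cleanest route is simply to exhibit one finite degree sequence.)

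The substantive implication is (ii) $\Rightarrow$ (iii), and this is where Proposition \ref{hklem2} does the work. Suppose $M$ is a pure Cohen-Macaulay $R$-module with $\pdim_R(M)$ finite. Then $\cmd(M) = 0$. By Proposition \ref{hklem2}, finiteness of the projective dimension forces $\cmd(R) \leq \cmd(M) = 0$, so $\cmd(R) = 0$, i.e., $R$ is Cohen-Macaulay. This is a one-line deduction once Proposition \ref{hklem2} is in hand.

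I do not expect any serious obstacle here, since all the technical content has been front-loaded into Propositions \ref{prop} and \ref{hklem2}; the theorem is essentially a repackaging. The only point deserving care is the (i) $\Rightarrow$ (ii) step: one must make sure that "length at most $\depth(R)$" degree sequences exist (they do, trivially, as $\depth(R) \geq 0$ always and constant-difference sequences of any short length qualify) and that the resulting pure Cohen-Macaulay module genuinely has finite projective dimension rather than merely a pure (possibly infinite) resolution — which is guaranteed because being "of type ${\bf d}$" for a finite degree sequence ${\bf d}$ of length $p$ means, by Definition \ref{puredef}(a), that $\pdim_R(M) = p$.
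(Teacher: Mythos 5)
Your proposal is correct and follows exactly the same route as the paper: (iii) $\Rightarrow$ (i) via Proposition \ref{prop}, (i) $\Rightarrow$ (ii) immediately, and (ii) $\Rightarrow$ (iii) from the inequality $\cmd(R) \leq \cmd(M)$ of Proposition \ref{hklem2}. Your extra care in checking that a module of type ${\bf d}$ for a finite degree sequence has finite projective dimension (by Definition \ref{puredef}(a)) is a point the paper leaves implicit, but the argument is the same.
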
	 

\begin{proof}
Statement (ii) follows immediately from (i). Since $M$ is Cohen-Macaulay if and only if $\cmd(M) = 0$, the proof of (ii) $\Rightarrow$ (iii) is a consequence of Proposition \ref{hklem2}, and (iii) $\Rightarrow$ (i) is the content of Proposition \ref{prop}.
\end{proof}

\end{document}